\def\cal{\mathcal}
\newcommand{\field}[1]{\mathbb{#1}}
\newcommand{\C}{\field{C}}
\newcommand{\N}{\field{N}}
\renewcommand{\P}{\mathbb{P}}
\newcommand{\gen}{\mathrm{gen}}
\newcommand{\ity}{\infty}
\newcommand{\m}{\setminus}
\newtheorem{theo}{Theorem}[section]
\newtheorem{lem}[theo]{Lemma}
\newtheorem{co}[theo]{Corollary}
\theoremstyle{definition}
\newtheorem{defi}[theo]{Definition}
\theoremstyle{remark}
\newtheorem{re}[theo]{\sc Remark}
\renewcommand{\min}{{\rm{min}}}
\renewcommand{\inf}{{\rm{inf}}}
\newcommand{\ord}{{\rm{ord}}}
\newcommand{\Sing}{{\rm{Sing}}}
\renewcommand{\graph}{{\rm{graph}}}
\newtheorem*{conj*}{Conjecture}
\newcommand{\fin}{\hspace*{\fill}$\Box$\vspace*{2mm}}
\font\tenmsy=msbm10
\def\Bbb#1{\hbox{\tenmsy#1}} 
\title[Bifurcation at infinity]
{Bifurcation locus and branches at infinity of a polynomial $f:\C^2\to \C$ }
\author{Zbigniew Jelonek}
\address{Instytut Matematyczny,
Polska Akademia Nauk,
\'Sniadeckich 8, 00-956 Warszawa\\
Poland.}
\email{najelone@cyf-kr.edu.pl}
\author{Mihai Tib\u ar}
\address{Math\' ematiques, UMR 8524 CNRS,
Universit\'e Lille 1, \  59655 Villeneuve d'Ascq, France.}
\email{tibar@math.univ-lille1.fr}
\thanks{The first author was partially supported by  Universit\'e Lille 1   and by
the grant of NCN, 2014-2017}
\date{\today}
\begin{document}

\begin{abstract}
We show that the number of bifurcation points at infinity of a
polynomial function  $f:\C^2\to\C$ is at most the number of
branches at infinity of a generic fiber of $f$ and that this upper bound can be diminished
by one in certain cases.
\end{abstract}

\maketitle

\section{Introduction}
Let  $f:\C^2 \to \C $ be a polynomial function in a fixed coordinate system. It is well known (as being proved originally by Thom \cite{T}), that $f$ is a locally trivial $C^\infty$ fibration outside  a
 finite subset  of the target. The smallest such set is called
{\it the bifurcation set of  $f$} and will be denoted here  by
$B(f)$. The set $B(f)$ might be larger than the set $f(\Sing f)$
of critical values of $f$ since it contains also the set $B_\infty
(f)$ of {\it bifurcation points at infinity}. Roughly speaking,
$B_\infty(f)$ consists of points at which the restriction of $f$ to a neighbourhood of infinity (i.e. outside a
large enough ball) is not a locally
trivial bundle.  We say that $a\in B_\infty(f)$ is a
\emph{critical value at infinity} of  $f$. There are several
criteria to detect such a value,  one may consult e.g.  \cite{Su},
\cite{HL}, \cite{Ti-reg}, \cite{Du}, \cite{CK}, \cite{Ti-book}.
For instance, $a\in B_\infty(f)$ if and only if there exists a
sequence of points $(p_k)_{k\in \N}\subset \C^2$ such that $\|
p_k\| \to\infty, {\rm grad}\ f(p_k)\to 0$ and $f(p_k)\to a$ as
$k\to \infty$.

Upper bounds for $\# B_\infty(f)$ have been found in the 1990's by L\^e V.T. and M. Oka \cite{LO} in terms of Newton polyhedra at infinity.
 An estimation in terms of the degree $d$ of $f$ was given by Gwo\'zdziewicz and P\l oski \cite{GP}: if $\dim \Sing f \le 0$ then $\# B_\infty(f)\le \max \{ 1, d-3\}$.  In the general case (dropping the condition $\dim \Sing f \le 0$)
 we have  $\# B_\infty(f)\le d-1$, see
 e.g. \cite{jel1}, \cite{jel2}. Recently Gwo\'zdziewicz \cite{gw} proved the following estimation of $\# B_\infty(f)$: if
$\nu_0$ denotes the number of branches at infinity of the fibre
$f^{-1}(0)$, then the number of critical values at infinity other
than 0 is at most $\nu_0$. Here we refine and improve this
statement by using a different method, in which results by
Miyanishi \cite{miya}, \cite{miyanishi} and Gurjar \cite{gur} play
an important role.

 For $a\in \C$, let us denote by $\nu_a$ the number of branches at infinity of the fiber
$f^{-1}(a)$. This number is equal to $\nu_\gen$ for all values  $a\in \C$ except finitely
many for which one may have either
$\nu_a<\nu_\gen$ or $\nu_a>\nu_\gen$. Let $\nu_{\min} := \inf\{ \nu_a \mid
a\in \C \}$. Let us denote by  $b$ the number of \emph{points at infinity of $f$}, i.e. $b := \# \overline{f^{-1}(a)} \cap L_\ity$, where $L_\ity$ is the line at infinity $\P^n \m \C^n$.

Under these notations, our main result is the following:

\begin{theo}\label{1}
Let $f:\C^2\to \C$ be a polynomial function of degree $d$. Then:

\begin{enumerate}
 \item $\# B_\infty(f) \le \min \{\nu_\gen,\nu_{\min} + 1 \}$.

\item $\# \{ a\in \C \mid \nu_a< \nu_\gen \} \le \nu_\gen - b$.

\item $\# \{ a\in \C\mid\nu_a> \nu_\gen \} \le \nu_{\min}$.
\end{enumerate}

 In case  $\nu_\gen > \frac{d}{2}$, we moreover have:

\begin{enumerate}

\item[(d)]  $\# B_\infty(f) \le \min \{\nu_\gen -1,\nu_{\min} \}$.

\item[(e)] $\# \{ a\in \C \mid \nu_a> \nu_\gen \} \le \nu_{\min}-1$.

\end{enumerate}

\end{theo}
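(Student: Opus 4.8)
The strategy is to compactify and reduce the problem to the theory of $\mathbb{A}^1$- and $\mathbb{C}^*$-fibrations on quasi-projective surfaces. Choose a minimal good resolution $X \to \mathbb{P}^2$ of the indeterminacy locus of the rational map $\mathbb{P}^2 \dashrightarrow \mathbb{P}^1$ extending $f$, obtaining a morphism $\tilde f : X \to \mathbb{P}^1$ whose restriction to $U := X \setminus \tilde f^{-1}(\infty)$ is a completion of $f : \mathbb{C}^2 \to \mathbb{C}$ in which the divisor at infinity $D := U \setminus \mathbb{C}^2$ is a simple normal crossings divisor. The number $\nu_a$ is then the number of irreducible components of the curve $\overline{f^{-1}(a)}$ that meet $D$, equivalently the number of local branches of the closure of the generic-type fiber along $D$; a value $a$ is a \emph{bifurcation value at infinity} precisely when the total transform $\tilde f^{-1}(a)$ differs combinatorially (dual graph, multiplicities) from the generic fiber, or equivalently when $\tilde f$ fails to be a locally trivial fibration near $D$ over $a$. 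I would first record, using the criteria cited in the Introduction (\cite{Su}, \cite{HL}, \cite{Ti-reg}, \cite{Ti-book}), that each $a \in B_\infty(f)$ is detected by a "vanishing" or "splitting" phenomenon among the branches at infinity, and that the generic fiber is a disjoint union of $\nu_\gen$ punctured curves near infinity.

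The three inequalities (a)--(c) would be obtained as follows. For (a), the bound $\# B_\infty(f) \le \nu_\gen$ is essentially Gwoździewicz's statement reproved: each critical value at infinity consumes at least one branch at infinity of the generic fiber in the sense that the adjacency structure at infinity changes, and distinct critical values cannot be "charged" to the same branch because a branch at infinity of the generic fiber deforms to a branch of each nearby fiber. This is where Miyanishi's and Gurjar's structure theory enters: the surface $U$ (or an open piece of it near $D$) carries an $\mathbb{A}^1$- or $\mathbb{C}^*$-fibration induced by $\tilde f$, and the classification of such fibrations on affine surfaces controls how many degenerate fibers can occur; the point at infinity count $b$ and the generic branch count $\nu_\gen$ bound the Picard-type and Euler-characteristic invariants that in turn bound the number of exceptional fibers. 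For (b), a value with $\nu_a < \nu_\gen$ means some branches at infinity of the generic fiber come together (or disappear into $D$) over $a$; a counting argument on $D$ — each of the $b$ points at infinity must carry at least one surviving branch in \emph{every} fiber (so $b \le \nu_{\min}$), while the "missing" branches, summed over all such special values $a$, are bounded by $\nu_\gen - b$ because the total branching that can degenerate is $\nu_\gen$ minus the part that is structurally forced to persist at each of the $b$ points. For (c), a value with $\nu_a > \nu_\gen$ means the fiber over $a$ splits off extra branches at infinity; but each extra branch over $a$ must, when $a$ is perturbed, merge back with the generic configuration, and a branch that is present in the \emph{minimal} fiber is present generically too, so the total excess is bounded by $\nu_{\min}$ — this is the "conservation of branches" principle made precise via the dual graph of $D$ and the multiplicities of $\tilde f$ along its components.

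The sharpenings (d) and (e) under the hypothesis $\nu_\gen > d/2$ are the genuinely new input. The idea is that when the generic fiber has many branches at infinity relative to the degree, the curve $\overline{f^{-1}(a)} \subset \mathbb{P}^2$ is forced by Bézout-type / genus considerations to be very degenerate at the line at infinity $L_\ity$ — concretely, $\nu_\gen > d/2$ forces the points of $\overline{f^{-1}(a)} \cap L_\ity$ to be highly singular, and in fact forces the presence of at least one "dicritical" component or a specific tangency pattern along $L_\ity$. One then argues that this configuration is incompatible with \emph{all} of the maximal number $\nu_\gen$ (resp. $\nu_{\min}$) of candidate bifurcation values being realized: the extra rigidity kills one of them. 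Quantitatively, I expect to use the fact that the sum $\sum_{p \in L_\ity} (\text{Milnor-type number at } p)$ is bounded by $(d-1)(d-2)$ minus the genus contribution, together with $\nu_\gen > d/2$, to show that the "budget" available for degenerations at infinity is one less than the naive bound $\nu_\gen$.

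\medskip

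\noindent\textbf{Main obstacle.} The hardest step is establishing the \emph{injectivity} of the assignment "bifurcation value at infinity $\mapsto$ the branch(es) at infinity it affects", i.e. showing that two distinct critical values at infinity cannot be accounted for by the same branch of the generic fiber — and, for (d)--(e), extracting the precise loss of one unit from the degree condition $\nu_\gen > d/2$. This is where one must invoke the Miyanishi–Gurjar classification of $\mathbb{A}^1$- and $\mathbb{C}^*$-fibrations on the relevant open surface (to control the admissible configurations of degenerate fibers at infinity) rather than relying only on the Newton-polyhedron or pole-order criteria; marrying that classification with the Bézout/adjunction inequality on $\mathbb{P}^2$ to squeeze out the extra "$-1$" is the crux of the argument.
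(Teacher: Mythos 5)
Your text is a plan rather than a proof, and the decisive steps are exactly the ones you leave open. The paper's argument does not run through an SNC resolution of the pencil on $\P^2$ nor through a classification of degenerate fibers of an $\mathbb{A}^1$- or $\C^*$-fibration induced by $f$ itself. The key construction is different and more concrete: one takes the closure $X$ of $\graph(f)$ in $\P^2\times\C$, passes to its normalization $X'$, and applies Miyanishi--Gurjar (via the cylinder-like open subset $\C\times\C^*\subset X'$ cut out by removing a generic line) to conclude that $X'\setminus\graph(f)$ is a \emph{disjoint union of affine lines} $l_1,\dots,l_r$, each carrying at most one singular point of $X'$, with $b\le r\le\nu_{\min}$. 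The whole counting then reduces to one-variable calculus: $f|_{l_i}$ is a polynomial $f_i$ of degree $\nu_i$ with $\sum_i\nu_i=\nu_\gen$, and every bifurcation value at infinity is the image of either a critical point of some $f_i$ or of the (at most one) singular point of $X'$ on some $l_i$. This yields (a), (b), (c) by elementary bookkeeping with $\ord_x f_i'$. Your ``main obstacle'' --- how to charge bifurcation values to branches injectively --- is precisely what this reduction resolves, and your proposed charging rule (one branch per critical value) is not even how the count works: a single line $l_i$ can produce up to $\nu_i-1$ distinct bifurcation values, plus one more from a singular point of $X'$.

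For (d) and (e) your sketch goes in a direction the paper does not take and which you do not substantiate: a bound by $(d-1)(d-2)$ on Milnor-type numbers at infinity is not shown to produce the loss of exactly one unit, and ``I expect to use'' is not an argument. The actual mechanism is a multiplicity computation with Cartier divisors: letting $d_i$ be the least integer with $d_i l_i$ Cartier in $X'$ (finite since the singularities are cyclic quotients), one has $d_i=1$ iff $l_i$ misses $\Sing(X')$ (a local complete-intersection lemma), and pulling back $L_\infty$ under the birational morphism $Z'\to\P^2$ gives $d=\sum_i n_i d_i\nu_i$; if every $d_i\ge 2$ this forces $d\ge 2\nu_\gen$, so $\nu_\gen>\frac d2$ guarantees a line $l_i$ with no singular point of $X'$, which is exactly where the ``$-1$'' comes from. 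None of this is present, even implicitly, in your outline, so as it stands the proposal has genuine gaps at the crux of (a)--(c) (the structure of the boundary as disjoint copies of $\C$ with at most one singular point each) and at the crux of (d)--(e) (the $d\ge 2\nu_\gen$ obstruction).
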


\begin{re}\label{r:1}
Point (a) of Theorem \ref{1} is equivalent to  Gwo\'zdziewicz's \cite[Theorem 2.1]{gw}. His result is a by-product of the local study of pencils of curves of Yomdin-Ephraim type. Our method is totally different and allows us to prove moreover several new issues, namely  (b)--(e)  of Theorem \ref{1}.
\end{re}

\begin{re}\label{r:2}
 As Gwo\'zdziewicz remarks, his inequality \cite[Theorem 2.1]{gw} is "almost" sharp, i.e. not sharp by one.
 Our new inequality (d) improves by one the inequality (a) under the additional condition $\nu_\gen > \frac{d}{2}$,
 thus yields the sharp upper bound, as shown by the example $f:\C^2 \to \C$, $f(x,y) = x + x^2 y$, where $d = \deg f = 3$,  $\nu_{\min} =\nu_\gen = 2$, $b=2$ and $B_\infty(f) = \{ 0\}$ with $\nu_0 = 3$.

The same example shows that our estimations  (b) and (e) are also
sharp.
\end{re}

\vspace{4mm}
\noindent
{\it Acknowledgements.} The authors are grateful to
Professor R.V. Gurjar from Tata Institute and Professor K. Palka
from IMPAN for helpful discussions.

\section{Proof of Theorem \ref{1}}

We need here the important concept of \emph{affine
surfaces which contain a cylinder-like open subset} which was introduced by M. Miyanishi
\cite{miya}. Let us recall it together with some properties which we shall use.

\begin{defi}\cite{miyanishi}
Let $X$ be a normal affine surface. We say that $X$  contains a
\emph{cylinder-like open subset} $U$, if there exists a smooth curve
$C$ such that $U\cong \C\times C$.
\end{defi}

Let $X$ be as in the above definition and let $\pi : U\to C$ be
the projection. After \cite[p.194]{miyanishi},  the projection
$\pi$ has a unique extension to a $\C$-fibration $\rho : X\to \bar
C$, where $\bar C$ denotes the smooth completion of the curve $C$.
We have the following important result of Gurjar and Miyanishi:

\begin{theo}\label{m1}  \cite{miya} \cite{gur-miya} \cite{gur}
Let $X$ be a normal affine surface with a $\C$-fibration $f : X\to
B$, where $B$ is a smooth curve. Then:
\begin{enumerate}
 \item $X$ has at most cyclic quotient singularities.
\item Every fiber of $f$ is a disjoint union of curves isomorphic to
$\C$.
\item A component of a fiber of $f$ contains at most one singular
point of $X$. If a component of a fiber occurs with multiplicity
$1$ in the scheme-theoretic fiber, then no singular point of $X$
lies on this component. \fin
\end{enumerate}

\end{theo}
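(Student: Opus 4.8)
The plan is to pass to a smooth projective model, read off the fibres of $f$ from the classification of degenerate fibres of $\P^1$-fibrations, and feed in the affineness of $X$ through the ampleness (Goodman) criterion.

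First I would take the minimal resolution $\mu\colon\tilde X\to X$ together with a smooth projective completion $\tilde X\subseteq\bar V$ so that $\bar D:=\bar V\m\tilde X$ is a simple normal crossings (SNC) divisor, $\bar D\cup E$ is SNC with $E:=\mu^{-1}(\Sing X)$, and $f\circ\mu$ extends to a morphism $\bar f\colon\bar V\to\bar B$ onto the smooth completion $\bar B$ of $B$ with general fibre $\P^1$; then $\bar f$ is a $\P^1$-fibration. By Zariski's lemma every fibre of $\bar f$ is a connected tree of smooth rational curves with negative semidefinite intersection form whose kernel is spanned by the fibre, every proper subconfiguration of a fibre being negative definite; moreover a degenerate fibre is obtained from a smooth fibre by a sequence of blow-ups of points and hence contains a $(-1)$-curve. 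Finally, since the closure in $\bar V$ of a general fibre $f^{-1}(b)\cong\C$ is a $\P^1$ meeting $\bar D$ in one smooth point of $\bar D$, there is a unique horizontal component $S\subseteq\bar D$, a section with $S\cdot\bar F=1$ on each fibre $\bar F$, and all other components of $\bar D$ are vertical.

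Next I would invoke affineness. By Goodman's criterion, $X$ being affine is equivalent to the existence of an effective divisor $\Delta$ with $\mathrm{Supp}\,\Delta=\bar D$ which is nef and big on $\bar V$ and whose null locus --- the union of the irreducible curves $\Gamma$ with $\Delta\cdot\Gamma=0$ --- is exactly $\bar D\cup E$; in particular $\Delta\cdot\Gamma>0$ for every fibre component $\Gamma$ of $\bar f$ lying in $\tilde X\m E$, and $\tilde X$ contains no complete curve disjoint from $E$. Fix $c\in B$ and write $\bar F:=\bar f^{-1}(c)=\sum m_iC_i$, a tree; call $C_i$ \emph{boundary}, \emph{exceptional}, or \emph{interior} according as $C_i$ lies in $\bar D$, in $E$, or in neither. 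Each interior $C_i$ meets $\bar D$, because $\tilde X$ has no complete curve off $E$. Combining the positivity $\Delta\cdot C_i>0$ on interior components with the negative definiteness of $\bar D\cup E$ restricted to $\bar F$ and with the tree structure, a careful analysis then pins down the configuration: every interior component meets $\bar D$ in exactly one point, it meets at most one exceptional chain, and two interior components are never joined in $\bar F$ by a path avoiding $\bar D$. (A configuration violating this --- an interior curve meeting two components of $\bar D$, or two interior curves linked through a negative-definite chain inside $E$ --- would produce on $\bar V$ a curve, or a contracted configuration, of negative $\Delta$-degree; already $\Delta\cdot E'<0$ for a $(-2)$-curve $E'\subseteq\bar D$ illustrates the mechanism by which the positivity of $\Delta$ rules such boundary curves out.) Now contract $E$ and delete $\bar D$: each interior component becomes a curve isomorphic to $\C$, these curves are pairwise disjoint in $X$, and together they exhaust $f^{-1}(c)$. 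This gives (2), and ``at most one exceptional chain per interior component'' is the first assertion of (3).

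Finally, (1): $E_p:=\mu^{-1}(p)$ for $p\in\Sing X$ is vertical, hence a connected negative-definite subtree of a degenerate fibre of $\bar f$ that contains no $(-1)$-curve, by minimality of $\mu$; such a subtree must be a chain of curves with all self-intersections $\le-2$, for a branch component of $E_p$ --- using the blow-up structure of the ambient fibre and the fact, from the previous paragraph, that the $(-1)$-curves of that fibre avoid $E_p$ --- would force a $(-1)$-curve to reappear inside $E_p$ as the fibre is contracted down to a smooth one, a contradiction. Thus $E_p$ is a Hirzebruch--Jung string and $p$ a cyclic quotient singularity. The remaining part of (3) follows by tracking multiplicities: $\mu$ preserves the multiplicity of the strict transform of a fibre component, and contracting the $(-1)$-curves of $\bar F$ down to a smooth, multiplicity-one, fibre shows that an interior component which carries an exceptional chain must have multiplicity $\ge2$ in $\bar f^{-1}(c)$; in particular the interior component $C_0$ that meets $S$, which has multiplicity $1$ because $S\cdot\bar F=1$, carries no singular point of $X$. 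The crux of the whole proof is the configuration analysis of the preceding paragraph: no soft argument suffices there, and one must combine the fine structure of degenerate fibres of $\P^1$-fibrations (the order of the blow-ups, the position of the section $S$, the multiplicities) with the positivity of the boundary divisor $\Delta$ supplied by affineness. Granting that, (1) and (3) amount to bookkeeping with adjunction and negative-definiteness, and (2) is immediate.
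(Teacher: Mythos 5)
The paper itself offers no proof of this statement: it is quoted, with the box, from Miyanishi, Gurjar--Miyanishi and Gurjar, so the benchmark is the proofs in those references, and your overall strategy (minimal resolution, SNC completion, extension to a $\P^1$-fibration with the unique horizontal boundary component $S$ a section, analysis of degenerate fibres, affineness fed in as positivity of a boundary divisor) is indeed the standard route. However, the write-up misstates its key input and defers exactly the hard part. A nef and big effective $\Delta$ with $\mathrm{supp}\,\Delta=\bar D$ cannot have null locus containing all of $\bar D$: if $\Delta\cdot D_j=0$ for every component $D_j$ of its support then $\Delta^2=\sum a_j\,\Delta\cdot D_j=0$, contradicting bigness; and the ``illustration'' $\Delta\cdot E'<0$ for a $(-2)$-curve $E'\subseteq\bar D$ is impossible for a nef divisor. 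What affineness really gives (e.g.\ by pulling back an ample boundary divisor from a projective closure of $X$ dominated by $\bar V$) is: $\Delta$ nef and big with support $\bar D$ and null locus \emph{contained in} $\bar D\cup E$, together with connectedness of $\bar D$. But the central claim --- each interior component meets $\bar D$ in exactly one point, no two interior components are linked through $E$, at most one exceptional chain per interior component --- is precisely the substance of the theorem, and you dispose of it with ``a careful analysis then pins down the configuration,'' supported only by the incorrect mechanism above. The numerical positivity you set up does not by itself yield this; the actual proofs (Miyanishi, Gurjar--Miyanishi, Fieseler) run an induction contracting $(-1)$-curves in the degenerate fibre relative to the section and the boundary, and that argument is entirely absent here.

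The argument for (a) also fails as stated: a connected, negative-definite subtree of a degenerate fibre of a $\P^1$-fibration containing no $(-1)$-curve need \emph{not} be a chain. For instance, successive blow-ups produce a fibre containing a fork of four $(-2)$-curves (a central $(-2)$-curve meeting three $(-2)$-curves) with the fibre's $(-1)$-curve attached elsewhere; so linearity of $E_p=\mu^{-1}(p)$ cannot follow from minimality of $\mu$ plus verticality alone, and must again be extracted from the affineness and the fibre-configuration analysis. Your ``a $(-1)$-curve would reappear inside $E_p$ as the fibre is contracted'' proves nothing: in contracting a degenerate fibre to a smooth one, every component but one eventually becomes a $(-1)$-curve, chains included, so no contradiction arises. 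Similarly, the multiplicity statement in (c) is asserted as ``bookkeeping'' rather than proved. In short, you have identified the correct framework, but the proposal has genuine gaps exactly at the points that make the theorem nontrivial, and its stated positivity criterion is internally inconsistent.
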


\begin{co}\label{c1}
Let $X$ be a normal affine variety, which contains a cylinder-like
open subset $U$. Then the set $X\setminus U$ is a disjoint union
of curves isomorphic to $\C$. Moreover, every connected component
$l_i$ of this set contains at most one singular point of $X$.
\fin
\end{co}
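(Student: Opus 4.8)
The plan is to deduce Corollary \ref{c1} directly from Theorem \ref{m1} applied to the $\C$-fibration $\rho : X \to \bar C$ obtained by extending the projection $\pi : U \cong \C \times C \to C$. First I would invoke the cited statement from \cite[p.194]{miyanishi}: the projection $\pi$ extends uniquely to a $\C$-fibration $\rho : X \to \bar C$, where $\bar C$ is the smooth completion of $C$. This places us in the setting of Theorem \ref{m1} with $B = \bar C$, so in particular $X$ has at most cyclic quotient singularities, every fiber of $\rho$ is a disjoint union of curves isomorphic to $\C$, and each irreducible component of a fiber carries at most one singular point of $X$, with none if that component has multiplicity one.

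Next I would identify $X \m U$ with a union of whole fibers of $\rho$ together with possibly some fiber components. Since over the open curve $C \subset \bar C$ the fibration $\rho$ restricts to the trivial bundle $\pi : \C \times C \to C$, every fiber of $\rho$ lying over a point of $C$ is already contained in $U$; hence $X \m U$ is contained in the union of the fibers $\rho^{-1}(p)$ for the finitely many points $p \in \bar C \m C$. One must be a little careful: $U$ could in principle meet such a fiber without containing it, but $U \cong \C \times C$ is saturated for $\pi$ over $C$ and its closure can only add entire fibers over $\bar C \m C$ (this is exactly the content of the uniqueness of the extension $\rho$ — the generic trivialisation forces the boundary fibers to be the only possible locus of $X \m U$, and since $U$ is open its complement is closed, so $X \m U$ is a union of irreducible fiber components over $\bar C \m C$). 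Therefore $X \m U = \bigcup_i l_i$ where each $l_i$ is an irreducible component of a fiber of $\rho$.

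Finally, I would apply parts (b) and (c) of Theorem \ref{m1} componentwise: each $l_i$, being an irreducible component of a fiber, is isomorphic to $\C$, and as a fiber is a \emph{disjoint} union of such curves these components $l_i$ are pairwise disjoint, so $X \m U$ is a disjoint union of curves isomorphic to $\C$; moreover by (c) each $l_i$ contains at most one singular point of $X$. Matching up the terminology (the "connected components" of $X \m U$ in the statement are precisely these disjoint copies of $\C$) gives the corollary.

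The main obstacle is the second step: justifying rigorously that $X \m U$ is a union of \emph{entire} fiber components over $\bar C \m C$ rather than some proper Zariski-closed piece of a fiber that meets $U$. The cleanest argument is that $U$ being the total space of the trivial $\C$-bundle over $C$ means $\rho^{-1}(C) = U$ set-theoretically (the extension $\rho$ agrees with $\pi$ there by uniqueness), so $X \m U = \rho^{-1}(\bar C \m C)$, a finite union of fibers — and then (b), (c) of Theorem \ref{m1} finish it immediately. If one is worried that $U$ might be strictly smaller than $\rho^{-1}(C)$, note that $\pi : U \to C$ is surjective with fiber $\C$ over each point, and $\rho$ restricted to $\rho^{-1}(C)$ has fibers that are disjoint unions of copies of $\C$ containing these; a multiplicity-one irreducible fiber of a $\C$-fibration over a smooth curve with a section is reduced and irreducible, forcing equality $U = \rho^{-1}(C)$.
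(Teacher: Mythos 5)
Your overall strategy --- extend $\pi$ to the $\C$-fibration $\rho:X\to\bar C$ and apply Theorem \ref{m1}(b),(c) to fiber components --- is the intended one, but the bridging step you yourself single out as the main obstacle is resolved incorrectly: it is \emph{not} true that $U=\rho^{-1}(C)$, nor even that $X\m U\subset\rho^{-1}(\bar C\m C)$. Concretely, let $X=\{xz=y^2-1\}\subset\C^3$ (a smooth affine quadric) and $U=X\m L_+$, where $L_\pm=\{x=0,\ y=\pm1\}$. The map $(s,w)\mapsto(x,y,z)=(-2w,\,-1-2sw,\,-2s^2w-2s)$ is an isomorphism $\C_s\times\C_w\cong U$ under which $x=-2w$; thus $U$ is a cylinder-like open subset with $C\cong\C$ and $\pi=x|_U$. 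By uniqueness the extension is $\rho=x:X\to\C\subset\P^1=\bar C$, and $\rho^{-1}(0)=L_+\sqcup L_-$ with only $L_-=\pi^{-1}(0)$ inside $U$. Hence $X\m U=L_+$ lies over the point $0\in C$, whereas $\rho^{-1}(\bar C\m C)=\rho^{-1}(\infty)=\emptyset$. The same example refutes the principle you invoke to force $U=\rho^{-1}(C)$: the fibration $\rho$ has a section (e.g.\ $x\mapsto(x,1,0)$), yet the fiber over $0\in C$ is reducible. So your second step, as written, is a genuine gap, even though the corollary itself (and your use of Theorem \ref{m1}) is unaffected: the components of $X\m U$ need not lie over $\bar C\m C$, and nothing in the statement requires them to.

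The correct reduction is weaker and still suffices: every irreducible component $D$ of $X\m U$ is \emph{vertical}, i.e.\ contained in a single fiber of $\rho$ (possibly over a point of $C$), and is then a whole component of that fiber. Indeed, $X\m U$ is a curve (complement of the affine open subset $U$ in the normal surface $X$), so its components are irreducible curves. For every $c\in C$, the set $\pi^{-1}(c)=U\cap\rho^{-1}(c)$ is open and connected in $\rho^{-1}(c)$, hence lies in one component $F\cong\C$ of the fiber; being a Zariski-open subset of $\mathbb{A}^1$ isomorphic to $\mathbb{A}^1$, it equals $F$. Therefore any other component of a fiber over $c\in C$ is entirely contained in $X\m U$. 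If some $D$ dominated $\bar C$, then infinitely many fibers over $C$ would meet $D\subset X\m U$ and thus acquire such extra components, producing infinitely many distinct irreducible components of $X\m U$ --- impossible. So each $D$ is contained in a fiber, hence equals one of its (closed) irreducible components, and Theorem \ref{m1}(b),(c) then give exactly the statement: the $l_i$ are pairwise disjoint copies of $\C$, each containing at most one singular point of $X$. With your second step replaced by this argument, your proof matches the one the paper intends.
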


 Let $f:\C^2\to \C$ be a polynomial function in fixed affine coordinates and denote by
$\tilde{f}(x,y,z)$ the homogenization of $f$ by a new variable
$z$, namely $\tilde{f}(x,y,z)=f_d+ zf_{d-1}+...+z^d f_0$. Let $X
:=  \{ ([x:y:z],t)\in \P^2\times \C \mid \tilde{f}(x,y,z) =
tz^d\}$ be the closure in $\P^2 \times \C$ of the graph $\Gamma:=
\graph(f)\subset \C^2\times \C$. Then $X$ is a hypersurface and
the points at infinity of $f$ are precisely the set $\{a_1,...,
a_n\} := \{f_d=0\} \subset L_\ity$, where $L_\ity := \{z=0\}$
denotes the line at infinity. The projection $\pi : X \to \C$,
$(x,t)\mapsto t$,  is a proper  extension of $f$.

Let $ X'$ be the normalization of $X$. Composing the normalisation map with $\pi$ yields $\pi': X'\to \C$, which is also a proper extension of $f$. For simplicity we shall denote it
again by $f$ in the following.

\begin{lem}\label{l1}
 The set $X'\setminus \Gamma$ is a disjoint union of affine
lines, $l_1,...,l_r$, each line $l_i$ is isomorphic to $\C$.
 On each line $l_i$ there is at
most one singular point of $X'$. Moreover, $b\le r\le \nu_{\min}$.
\end{lem}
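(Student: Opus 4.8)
The plan is to realize $\Gamma = \graph(f)$ as a cylinder-like open subset of the normal affine surface $X'$ and then invoke Corollary \ref{c1}. Indeed, over a generic value $t_0 \in \C$ the fiber $f^{-1}(t_0) \subset \C^2$ is a smooth affine curve $C$, and since $f$ is a locally trivial fibration over a Zariski-open $V \subset \C$, the preimage $\Gamma_V := \Gamma \cap (\C^2 \times V)$ is isomorphic to $C \times V$; shrinking $V$ further if necessary we may assume $V \cong \C$, so that $\Gamma_V \cong C \times \C$ is a cylinder-like open subset of $X'$. By Corollary \ref{c1} the complement $X' \m \Gamma_V$ is a disjoint union of curves isomorphic to $\C$, each containing at most one singular point of $X'$. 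The complement $X' \m \Gamma$ is the union of the curves lying in $f^{-1}(V^c)$, hence is a closed subset of $X'\m\Gamma_V$; one checks it is a union of connected components of $X'\m \Gamma_V$ (no component of the extra fibers $f^{-1}(v)$, $v \notin V$, can meet $\Gamma_V$ by construction, and the boundary of $\Gamma$ at infinity lies over all of $\C$), so it too is a disjoint union of affine lines $l_1,\dots,l_r$, each carrying at most one singular point of $X'$. This establishes the first two assertions.

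For the inequality $b \le r$, observe that $X' \m \Gamma$ maps onto $L_\ity \cap \overline{f^{-1}(\C)}$ under the composition of the normalization map with the projection to $\P^2$; more precisely each of the $b$ points at infinity $a_1,\dots,a_b$ of $f$ is the limit of points of $\Gamma$, hence lies in the image of $X'\m\Gamma$, and distinct points at infinity come from distinct components $l_i$ (each $l_i$, being a curve over $\C$, projects either to a point of $L_\ity$ or dominates... but the points of $X'\m\Gamma$ over a fixed $t$ are finite, so each $l_i$ maps to a single point of $L_\ity$). Counting shows there are at least $b$ components, so $b \le r$.

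For $r \le \nu_{\min}$, fix a value $a \in \C$ achieving $\nu_a = \nu_{\min}$. The points of $X' \m \Gamma$ lying over $a$ — i.e. in $f^{-1}(a) \cap (X'\m\Gamma)$ — are exactly the points added to the affine fiber $f^{-1}(a)\subset\C^2$ to compactify it inside $X'$, and each branch at infinity of $f^{-1}(a)$ contributes exactly one such point after normalization (normalization separates and desingularizes the branches at infinity). Thus $\#(f^{-1}(a) \cap (X'\m\Gamma)) = \nu_a = \nu_{\min}$. Since each line $l_i$ meets the fiber $f^{-1}(a)$ in exactly one point (the restriction $f|_{l_i} : l_i \cong \C \to \C$ is a proper nonconstant morphism from $\C$ to $\C$ — if $f$ were constant on $l_i$, that constant value would be a bifurcation value and $l_i$ would be a whole fiber component of $X'$, contradicting that $l_i \subset X'\m\Gamma$ accumulates to infinity over all values; hence $f|_{l_i}$ is a degree-one map, a bijection), the $r$ lines account for $r$ distinct points in $f^{-1}(a)\cap(X'\m\Gamma)$, giving $r \le \nu_{\min}$.

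The main obstacle I expect is the bookkeeping in the second and third paragraphs: verifying that $f$ restricted to each component $l_i$ is a bijection onto $\C$ (equivalently that $l_i$ is a "section-like" curve hitting every fiber exactly once), and matching up the points of $X'\m\Gamma$ over a given value $a$ with the branches at infinity of $f^{-1}(a)$ — this requires a careful local analysis at the points of $L_\ity$ and a clean use of the fact that normalization resolves the curve germs transverse to the exceptional directions. The subtlety is that a priori some $l_i$ could be a multiple component of some special fiber of the proper extension; ruling this out uses part (3) of Theorem \ref{m1} (a multiplicity-one component carries no singularity) together with the cylinder structure, but one must argue that every $l_i$ does meet the generic fiber, which follows because $X' \m \Gamma_V$ has pure dimension one and each of its components dominates $\C$ via $f$ (otherwise it would be contained in a single fiber, forcing a positive-dimensional piece of $X'\m\Gamma$ inside $\Gamma_V\cong C\times\C$, impossible).
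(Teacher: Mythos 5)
There is a genuine gap, and it sits at the foundation of your argument: the open set you propose is not a cylinder-like open subset in Miyanishi's sense, so Corollary \ref{c1} does not apply to it. First, the local triviality of $f$ over $V=\C\m B(f)$ is only a $C^\infty$ (or at best locally holomorphic) triviality; it does not give an algebraic isomorphism $\Gamma_V\cong C\times V$, and the Definition/Corollary \ref{c1} machinery is algebraic. Second, even granting such a product structure, a cylinder-like open subset must be of the form $\C\times(\hbox{smooth curve})$: here neither factor is $\C$ in general --- the generic fiber $C$ of $f$ is an affine curve of arbitrary genus with several punctures, and $V=\C\m B(f)$ is $\C$ minus a finite set, which cannot be made isomorphic to $\C$ by ``shrinking further'' (removing more points only makes it worse); it equals $\C$ only when $B(f)=\emptyset$. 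A quick sanity check shows the conclusion you draw from Corollary \ref{c1} cannot hold: $X'\m\Gamma_V$ contains the special affine fibers $f^{-1}(v)$, $v\notin V$, which are typically not disjoint unions of curves isomorphic to $\C$, so $\Gamma_V$ simply is not cylinder-like. The paper's cylinder points in a completely different direction: one picks a line $l\subset\P^2$ avoiding the points at infinity $\{a_1,\dots,a_n\}$ of $f$, passes to the affine surface $X_1'$ (normalization of $X\cap((\P^2\m l)\times\C)$), and uses $U\cong\C^2\m l\cong\C\times\C^*$, i.e.\ the $\C$-fibration by lines parallel to $l$, not by fibers of $f$; Corollary \ref{c1} applied there yields the decomposition of $X'\m\Gamma$ into lines $l_i$ with at most one singular point each.

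A secondary error: your claim that $f|_{l_i}$ is a degree-one map (a bijection) is false in general. The correct statement, used in the paper, is that $f|_{l_i}$ is finite, hence surjective, and can be identified with a one-variable polynomial of degree $\nu_i\ge 1$, where $\sum_i\nu_i=\nu_\gen$ may well exceed $r$. For the inequality $r\le\nu_{\min}$ surjectivity suffices (each $l_i$ meets the set of points at infinity of $f^{-1}(a)$ in at least one point, and each such point carries at least one branch at infinity of $f^{-1}(a)$), so that part of your bookkeeping can be repaired; likewise your argument for $b\le r$ is essentially the paper's ``obvious'' remark. But without a legitimate cylinder-like open subset the first and main assertion of the lemma is unsupported.
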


\begin{proof}
Let us choose a line $l\subset \Bbb P^2$ such that $l\cap \{
a_1,...,a_n\}=\emptyset$. Let $X_1:=(\Bbb P^2\setminus l)\times \C
\cap X'$. The surface $X_1$ is affine and $X_1'\setminus
\Gamma=\bigcup^r_{i=1} l_i$, where $X_1'$ denotes the
normalization of $X_1$. Since the surface $X_1'$ contains a
cylinder-like open subset $U\cong\C^2\setminus l=\C\times \C^*$,
the first part of our claim follows from Corollary \ref{c1}.
Next, the map $f$ restricted to $l_i$ is finite, hence
surjective. This implies that every fiber of $f$ has a branch at infinity which intersects
$l_i$. In particular $r\le \nu_{\min}$. The inequality
$r\ge b$ is obvious.
\end{proof}

\noindent
Denote by $f_i : l_i\cong \C\to \C$ the restriction of $f$ to $l_i$. It can be identified with a
one variable polynomial, the degree of which is equal to the number $\nu_i$ of branches of a
generic fiber of $f$ which intersect
$l_i$. In particular $\sum^r_{i=1} \nu_i=\nu_\gen$.
 The polynomial
$f_i$ of degree $\nu_i$ can have at most $\nu_i-1$ singular points
and their images by $f_i$ are bifurcation values at infinity for
$f$ (this can be easily deduced from \cite{er}). There might also
be at most one singular point of $X'$ which lies on $l_i$, and
then its image by $f$ is possibly a bifurcation value too. Summing
up, we get that $f$ can have at most $\nu_\gen$ critical values at
infinity, which shows one of the inequalities of point (a).
Moreover, the inequality $\nu_a<\nu_\gen$ is possible only if $a$
is a critical value of some polynomial $f_i$. This means that $\#
\{ a\in \C\mid\nu_a< \nu_\gen \} \le \sum^r_{i=1} (\nu_i-1)\le
\nu_\gen- r\le \nu_\gen-b$. This proves (b).

Let us assume now $\nu_a=\nu_{\min}$. We have $\nu_a \ge
\sum^r_{i=1}\#\{ x\in l_i \mid f_i(x)=a\}$ since in
every such point $x$ there is at least one branch at infinity of the fiber $f^{-1}(a)$.
 Note that if
$f_i(x)=a$ then $\ord_x (f_i-a) =\ord_x f_i'+1$. Thus:
\[ \#\{ x\in l_i \mid f_i(x)=a\} =  \sum_{ x\in
l_i, f_i(x)=a } [\ord_x (f_i-a)- \ord_x f_i'].\]
 We have clearly the equality
$\sum_{x\in l_i} \ord_x (f_i-a)=\nu_i$. Hence
\[\sum_{ x\in
l_i, f_i(x)=a } [\ord_x (f_i-a)- \ord_x f_i']= \nu_i - \sum_{
x\in l_i, f_i(x)=a } \ord_x f_i'.\]
 Since $\sum_{ x\in l_i} \ord_x
f_i'=\nu_i-1$ we have:
 $$ \nu_i - \sum_{ x\in l_i,
f_i(x)=a } \ord_x f_i' = 1 + \sum_{x\in l_i,
f_i(x)\not=a} \ord_x f_i'.$$

Note that:
  \[
            1+ \sum_{x\in l_i, f_i(x)\not=a} \ord_x
f_i'\ge \#\{ x\in l_i \mid f(x)\not=a, \mbox{ and either }
f'_i (x)=0\ \mbox{ or } \ x \in \Sing(X')\}.
           \]
 The number at the right side is greater or
equal to the number of critical values at infinity of $f$
different from $f^{-1}(a)$. Finally, taking the sum over all $i\in
\{1,\ldots, r\}$  we get $\# B_\infty(f)   \le \nu_{\min}+1$, which completes the
proof of (a).

To prove (c), note that if the fiber $f^{-1}(a)$ does not contain
singular points of $X'$, then the intersection multiplicity
$\overline{l_i}\cdot f^{-1}(a)$ is equal to $\nu_i= \deg f_i$, hence
the fiber $f^{-1}(a)$ has  at most $\nu_i$ branches on $l_i$. This
implies $\nu_a\le \nu_\gen$. Hence  $\# \{ a\in \C \mid \nu_a> \nu_\gen \} \le
r\le \nu_{\min}$.

To prove (d) and (e) it is enough to show that if $\nu_\gen>
\frac{d}{2}$, then at least one line $l_i$ does not contain
singular points of  $X'$. Let $d_i$ be the smallest positive
integer such that $d_il_i$ is a Cartier divisor in $X'$ (such a
number exists because   $X'$ has only cyclic singularities). Since
$l_i$ is smooth, we have that $d_i=1$ if and only if the line
$l_i$ does not contain any singular point of $X'$, see Lemma
\ref{ber} below. Now let $Z$ be the closure of $\Gamma$ in $\Bbb
P^2\times \Bbb P^1$ and let $Z'$ denote its normalization. We have clearly the inclusion
 $X'\subset Z'$. Let $\Pi : Z'\to \Bbb
P^2$ the first projection, where the second projection $Z'\to \Bbb
P^1$ is an extension of $f$ which we will denote for simplicity by
$f$ again. Let $f^{-1}(\infty)=S_1\cup... \cup S_k$ (where $S_i$
are irreducible and taken with reduced structure). Recall that
$L_\infty=\Bbb P^2\setminus \C^2$ is the line at infinity. We have
$\Pi^*(L_\infty)=\sum_{i=1}^k m_iS_i+\sum^r_{i=1}
e_i\overline{l_i}$. Since $\Pi^*(L_\infty)$ is a Cartier divisor
we have $e_i=n_id_i,$ where $n_i$ is a positive integer.

Let us assume that any line $l_i$  contains some singular point of
$X'$, i.e., that $d_i> 1$ for any $i$. Denoting by
$F\subset \Bbb P^2$ the closure of a general fiber of $f=a$, since $\Pi$ is a birational morphism, we have:
$$d= F\cdot L_\infty=\Pi^*(F)\cdot \Pi^*(L_\infty)=f^*(a)\cdot (\sum_{i=1}^k m_iS_i+\sum^r_{i=1}
e_i\overline{l_i}).$$

Note that $\Pi^*(F)\cdot \sum^k_{i=1} m_iS_i=0$ since
$|f^*(a)|\cap |\sum^k_{i=1} m_iS_i|=|f^*(a)|\cap
|f^*(\infty)|=\emptyset$. Moreover we have $\nu_i=f^*(a) \cdot
\overline{l_i}$. Thus:
 $$d=\sum^r_{i=1}
n_id_i\nu_i\ge \sum^r_{i=1} 2\nu_i=2\nu_\gen$$
and this ends our proof.
 \fin

\begin{lem}\label{ber}
Let $X^n$ be an algebraic variety and let $Z^r\subset X^n$ be a
subvariety which is a complete intersection in $X^n$. If a point
$z\in Z^r$ is nonsingular on $Z^r$, then it is nonsingular on $X^n$.
\end{lem}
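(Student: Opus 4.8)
The plan is to reduce everything to commutative algebra of the local ring at $z$ and then compare Zariski cotangent spaces. Write $A=\mathcal{O}_{X,z}$ for the local ring of $X$ at $z$, with maximal ideal $\mathfrak{m}$ and residue field $k$, and let $I\subset A$ be the ideal cutting out $Z$ near $z$, so that $B:=A/I=\mathcal{O}_{Z,z}$ has maximal ideal $\mathfrak{n}=\mathfrak{m}/I$. The hypothesis that $Z^r$ is a complete intersection of codimension $n-r$ in $X^n$ means precisely that, locally at $z$, the ideal $I$ is generated by $n-r$ elements $g_1,\dots,g_{n-r}$; moreover $g_j\in\mathfrak{m}$ for all $j$ since $z\in Z$. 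Finally, because $X$ is an $n$-dimensional variety and $z\in X$, we have $\dim A=n$.

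First I would recall the standard criterion that $z$ is nonsingular on $Z$ if and only if $B$ is a regular local ring, i.e. $\dim_k\mathfrak{n}/\mathfrak{n}^2=\dim B=r$. Next, from $\mathfrak{n}/\mathfrak{n}^2\cong\mathfrak{m}/(\mathfrak{m}^2+I)$ and the canonical surjection $\mathfrak{m}/\mathfrak{m}^2\twoheadrightarrow\mathfrak{m}/(\mathfrak{m}^2+I)$ one gets the exact sequence of $k$-vector spaces
\[
(\mathfrak{m}^2+I)/\mathfrak{m}^2\ \longrightarrow\ \mathfrak{m}/\mathfrak{m}^2\ \longrightarrow\ \mathfrak{n}/\mathfrak{n}^2\ \longrightarrow\ 0 .
\]
The left-hand term is spanned over $k$ by the classes of $g_1,\dots,g_{n-r}$ — indeed any $a_jg_j$ with $a_j\in\mathfrak{m}$ already lies in $\mathfrak{m}^2$ — so it has dimension at most $n-r$. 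Hence $\dim_k\mathfrak{m}/\mathfrak{m}^2\le(n-r)+\dim_k\mathfrak{n}/\mathfrak{n}^2=(n-r)+r=n$.

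To conclude, I would invoke the general inequality $\dim_k\mathfrak{m}/\mathfrak{m}^2\ge\dim A$ valid for any Noetherian local ring; together with the bound just obtained and with $\dim A=n$, this forces $\dim_k\mathfrak{m}/\mathfrak{m}^2=n=\dim A$, which is exactly the statement that $A$ is regular. Therefore $z$ is a nonsingular point of $X$. (Alternatively, if one prefers a geometric formulation, embedding a neighbourhood of $z$ in a smooth ambient space and running the same count with the Jacobian criterion yields the same conclusion.)

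I do not foresee a serious obstacle here: the argument is a clean dimension count on cotangent spaces. The only points requiring care are bookkeeping ones — fixing the meaning of \emph{complete intersection} as the local statement that $I$ needs exactly $\operatorname{codim}_X Z=n-r$ generators near $z$ (this is what makes the inequality tight), and the tacit irreducibility of $X$ at $z$, which guarantees $\dim\mathcal{O}_{X,z}=n$ and is satisfied in our application since there $X'$ is normal, hence irreducible.
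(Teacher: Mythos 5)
Your argument is correct and is essentially the same as the paper's: both proofs compare Zariski cotangent spaces via the identification $\mathfrak{n}/\mathfrak{n}^2\cong\mathfrak{m}/(\mathfrak{m}^2+I)$, bound $\dim\mathfrak{m}/\mathfrak{m}^2$ by $(n-r)+r=n$ using the $n-r$ generators of the complete-intersection ideal, and conclude regularity of $\mathcal{O}_{X,z}$ from the general inequality $\dim_k\mathfrak{m}/\mathfrak{m}^2\ge\dim\mathcal{O}_{X,z}=n$. Your remarks on the local meaning of complete intersection and on $\dim\mathcal{O}_{X,z}=n$ only make explicit what the paper leaves tacit.
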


\begin{proof}
 Let us recall that if $X^n$ is an algebraic variety, then
a point $z\in X^n$ is smooth, if and only if the local ring ${\cal
O}_z(X)$ is regular. This is equivalent to the fact  that $\dim_\C
{\mathfrak m}/{\mathfrak m}^2=\dim X^n=n,$ where $\mathfrak m$
denotes the maximal ideal of ${\cal O}_z(X^n)$.

Let $I(Z^r)=\{ f_1,...,f_{l}\}\subset \C[X^n],$ where $l=n-r$,  be
the ideal of $Z^r$ in the ring $\C[X^n]$. We have ${\cal
O}_z(Z^r)={\cal O}_z(X^n)/(f_1,...,f_l)$. In particular if
$\mathfrak m'$ denotes the maximal ideal of ${\cal O}_z(Z^r)$ and
$\mathfrak m$ denotes the maximal ideal of ${\cal O}_z(X^n)$ then
$\mathfrak m' = \mathfrak m/(f_1,...,f_l)$. Let
$\alpha_i$ denote a class of the polynomial $f_i$ in ${\mathfrak
m}/{\mathfrak m}^2$. Let us note that
\begin{equation}\label{eq}
{\mathfrak m'}/{\mathfrak m'}^2={\mathfrak m}/({\mathfrak
m}^2+(\alpha_1,...,\alpha_l)).
\end{equation}
Since the point $z$ is smooth on $Z^r$ we have $\dim_\C {\mathfrak
m'}/{\mathfrak m'}^2=\dim Z^r=\dim X^n-l$. Take a basis
$\beta_1,...,\beta_{n-l}$ of the space $ {\mathfrak m'}/{\mathfrak
m'}^2$ and let $\overline{\beta_i}\in {\mathfrak m}/{\mathfrak
m}^2$ correspond to $\beta_i$ under the correspondence (\ref{eq}).
Note that vectors $\overline{\beta_1},...,\overline{\beta_{n-l}},
\alpha_1,..., \alpha_l$ generate the space ${\mathfrak
m}/{\mathfrak m}^2$. This means that $\dim_\C {\mathfrak
m}/{\mathfrak m}^2\le n-l+l=n=\dim X^n$. Since we also have the general inequality
$\dim_\C {\mathfrak m}/{\mathfrak m}^2\ge \dim X^n$, we obtain the equality
$\dim_\C {\mathfrak m}/{\mathfrak m}^2=\dim X^n$, which means that $X^n$ is nonsingular at $z$.
\end{proof}

\end{document}